\newtheorem{theorem}{Theorem}[section]
\newtheorem{corollary}[theorem]{Corollary}
\newtheorem{lemma}[theorem]{Lemma}
\newtheorem{proposition}[theorem]{Proposition}
\theoremstyle{definition}
\theoremstyle{remark}
\newtheorem{remark}[theorem]{Remark}
\numberwithin{equation}{section}
\newcommand{\scr}[1]{\mathscr #1}
\def\cM{\mathcal M}
\def\bg{\begin}
\def\de{\end{equation}}
\def\edar{\end{eqnarray}}
\def\lb{\label}
\def\l{\left}\def\r{\right}
\def\q{\quad}
\def\lan{\langle}\def\ran{\rangle}
\def\[{\l[} \def\]{\r]}
\def\({\l(} \def\){\r)}
 \def\beqlb{\begin{eqnarray}}\def\eeqlb{\end{eqnarray}}
 \def\beqnn{\begin{eqnarray*}}\def\eeqnn{\end{eqnarray*}}
\def\d{{\mbox{\rm d}}}
\title{\bf  {Dirichlet eigenvalues and exit time moments for symmetric Markov processes}}
\author{
{\bf Lu-Jing Huang}\\
{\small School of Mathematics and Statistics, Fujian Normal University }\\
{\bf Tao Wang}\footnote{Corresponding author: Tao Wang.}\\
{\small  Laboratory of Mathematics and Complex Systems (Ministry of Education),}\\ {\small School of Mathematical Sciences, Beijing Normal University}
}
\date{}
\begin{document}

 \maketitle


\bg{abstract}

We give some relationships between the first Dirichlet eigenvalues and the exit time moments for the general symmetric Markov processes.
As applications, we present some examples, including symmetric diffusions and $\alpha$-stable processes, and provide the estimates of their first Dirichlet eigenvalues and the exit time moments.

\end{abstract}

{\bf Keywords and phrases:} Dirichlet eigenvalue; exit time; symmetric Markov process; diffusion;  $\alpha$-stable process.

{\bf Mathematics Subject classification(2020):} 60J46 47A75

\section{Introduction and main results}
The relationship between the first Dirichlet eigenvalues and the exit time moments of Markov processes is an important topic of recent interest in probability theory and geometry, for instance,  for a Brownian motion
on  Riemannian manifold, the Dirichlet eigenvalue which is an important geometric quantity, and the exit time moments,  known as torsional rigidity (see \cite{DLM17}) or moment spectrum (see \cite{McD13}), have a numerous  studies in the literature. More specifically,  \cite{Polya48,PS51} obtain the classical P\'{o}lya's inequality for the Dirichlet eigenvalue and the  torsional rigidity, further results can be found in e.g. \cite{BBV15,BFNT16}. McDonald \cite{McD02,McD13} used the mean exit time moment spectrum to analysis the Dirichlet spectrum, while \cite{DLM17,HMP16} prove a number of inequalities of them.

The purpose of this paper is to extend the problem to general symmetric Markov processes, and develop some new relationships between their first Dirichlet eigenvalues and exit time moments.


To state our main results,  we start with some notations. Let $(E,d,\mu)$ be a metric measure space, that is, $(E, d)$ is a Polish space with Borel $\sigma$-algebra $\mathscr{B}(E)$, and $\mu$ is a positive Radon measure on $E$ with full support.
Let $X:=(X_t)_{t\geqslant0}$  be a Markov process on $(E,d)$ with transition kernel $P_t(x,\d y),t\geqslant 0,x,y\in E$. We assume that $X$ is symmetric  with respect to $\mu$, that is,
$$
\mu(\d x)P_t(x,\d y)=\mu(\d y)P_t(y,\d x)\q \text{for all }t\geqslant0,\ x,y\in E.
$$
Denote by $(\cal{L},\scr{D}(\cal{L}))$ the generator of process $X$ in $L^2(\mu)$,  and by $(\cal{E},\cal{F})$ the associated symmetric Dirichlet form. It is known that
$$
\mathcal{E}(f,g)=-\langle \mathcal{L}f,g\rangle_\mu,\quad f,g\in \scr{D}(\cal L).
$$
Assume additionally that $(\cal{E},\cal{F})$ is regular, i.e., $\mathcal{F}\cap C_0(E)$ is dense both in $\cal{F}$ and $C_0(E)$,  where $C_0(E)$ is  the set of continuous functions with compact supports on $E$.


Let  $\Omega \subset E$ be a domain, $\mu(f):=\int_Ef\d \mu$. Define the first Dirichlet eigenvalue of process $X$ on $\Omega$ as
\begin{equation}\label{Diri-eigen}
\lambda_0(\Omega)=\inf \left\{\frac{\mathcal{E}(f,f)}{\mu(f^2)}: f \in \mathcal{F}  \text { and }\widetilde{f}=0\ \text{q.e. on }\Omega^{c}\right\},
\end{equation}
where $\widetilde{f}$ is the quasi-continuous version of $f$ and q.e. means quasi-everywhere, see \cite[Definition 1.2.12]{CF12} for more details.
From the regularity of $(\mathcal{E},\mathcal{F})$, it is clear that
\begin{align*}
\lambda_0(\Omega)&=\inf \left\{\frac{\mathcal{E}(f,f)}{\mu(f^2)}: f \in \mathcal{F}\cap C_0(\Omega)\right\}\\
&=\inf \left\{\frac{\mathcal{E}(f,f)}{\mu(f^2)}: f \in \mathcal{F}  \text { and }\widetilde{f}=0\ \mu\text{-a.e. on }\Omega^{c}\right\}.
\end{align*}
Denote by $\tau_\Omega=\inf\{t\geqslant 0: X_t\notin \Omega\}$ the first exit time of process $X$ from $\Omega$,
and by $T_{k}(\Omega)$ the $k$-th moment of $\tau_\Omega$, i.e.,
$$
T_{k}(\Omega)=\mathbb{E}_\mu[\tau_\Omega^k]:=\int_{\Omega} \mathbb{E}_{x}[\tau_\Omega^{k}] \mu (\d x).
$$


We can now state our first main result in the following, which gives some upper bounds of the first Dirichlet eigenvalue $\lambda_0(\Omega)$.
\begin{theorem}\label{main}
	Let $\Omega\subset E$ be a domain with $\mu(\Omega)<\infty$ and $\lambda_0(\Omega)>0$. Then for every $k\geqslant 1$,
	\begin{equation}\label{moment1}
		\lambda_{0}(\Omega) \leqslant \frac{(k !)^{2}}{(2 k-1) !} \frac{T_{2 k-1}(\Omega)}{T_{k}^{2}(\Omega)}\mu(\Omega),
	\end{equation}
	and
	\begin{equation}\label{moment3}
		\lambda_{0}(\Omega) \leqslant  k \frac{T_{k-1}(\Omega)}{T_{ k}(\Omega)}.
	\end{equation}
Thus we have
 	\begin{equation}\label{k-moment}
T_{ k}(\Omega)\leqslant\frac{k!\mu(\Omega)}{\lambda_0(\Omega)^k},\quad \text{and}\quad \mathbb{E}_\mu[\mathrm{e}^{\beta\tau_{\Omega}}]\leqslant \left(1+\frac{\beta}{\lambda_0(\Omega)-\beta}\right)\mu(\Omega)
\end{equation}
for all $\beta\in (0, \lambda_0(\Omega))$.
\end{theorem}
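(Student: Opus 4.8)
The plan is to express every moment $T_k(\Omega)$ through the Green operator of the process killed on leaving $\Omega$, and then to deduce all three displays from the spectral calculus of that operator; conceptually the whole statement reduces to an elementary moment problem for a single finite measure on a bounded interval.

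First I would pass to the part process $X^{\Omega}$ of $X$ on $\Omega$, whose Dirichlet form is $(\mathcal{E},\mathcal{F}_{\Omega})$ with $\mathcal{F}_{\Omega}=\{f\in\mathcal{F}:\widetilde f=0\ \text{q.e. on }\Omega^{c}\}$. The hypothesis $\lambda_{0}(\Omega)>0$ is precisely the Poincar\'e inequality $\mathcal{E}(f,f)\geqslant\lambda_{0}(\Omega)\,\mu(f^{2})$ on $\mathcal{F}_{\Omega}$; hence $X^{\Omega}$ is transient, its $0$-order resolvent (Green operator) $G_{\Omega}=\int_{0}^{\infty}P_{s}^{\Omega}\,\d s$ is bounded, self-adjoint and non-negative on $L^{2}(\Omega,\mu)$, and the spectrum of $G_{\Omega}$ is contained in $[0,1/\lambda_{0}(\Omega)]$ (equivalently $-\mathcal{L}$ restricted to $\Omega$ has spectrum in $[\lambda_{0}(\Omega),\infty)$). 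Writing $u_{k}(x)=\mathbb E_{x}[\tau_{\Omega}^{k}]$, the identity $\tau_{\Omega}^{k}=k\int_{0}^{\tau_{\Omega}}(\tau_{\Omega}-s)^{k-1}\,\d s$, the Markov property, and Tonelli's theorem give the recursion $u_{k}=k\int_{0}^{\infty}P_{s}^{\Omega}u_{k-1}\,\d s=k\,G_{\Omega}u_{k-1}$ with $u_{0}=\mathbf 1_{\Omega}$. Since $\mu(\Omega)<\infty$ we have $\mathbf 1_{\Omega}\in L^{2}(\mu)$, so induction (using that $G_{\Omega}$ is bounded on $L^{2}$ and that $\|P_{s}^{\Omega}\|_{L^2\to L^2}\leqslant\mathrm e^{-\lambda_{0}(\Omega)s}$, which makes the integral converge in $L^{2}$) yields $u_{k}=k!\,G_{\Omega}^{k}\mathbf 1_{\Omega}\in L^{2}(\Omega,\mu)$ and therefore
\[
T_{k}(\Omega)=\mu(u_{k})=k!\,\langle G_{\Omega}^{k}\mathbf 1_{\Omega},\mathbf 1_{\Omega}\rangle_{\mu},\qquad k\geqslant 0 .
\]

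Next, let $\{E_{s}\}_{s\geqslant 0}$ be the spectral resolution of $G_{\Omega}$ and define the finite Borel measure $\nu$ on $[0,1/\lambda_{0}(\Omega)]$ by $\nu(\d s)=\d\langle E_{s}\mathbf 1_{\Omega},\mathbf 1_{\Omega}\rangle_{\mu}$, so that $\nu$ has total mass $\|\mathbf 1_{\Omega}\|_{2}^{2}=\mu(\Omega)$ and $T_{k}(\Omega)=k!\int_{0}^{1/\lambda_{0}(\Omega)}s^{k}\,\nu(\d s)$ for every $k\geqslant 0$; in particular $T_{0}(\Omega)=\mu(\Omega)$ and each $T_{k}(\Omega)$ is finite. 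The only structural fact used from here on is that $0\leqslant\lambda_{0}(\Omega)s\leqslant 1$ on $\supp(\nu)$. For \eqref{moment3}: since $s^{k-1}\bigl(1-\lambda_{0}(\Omega)s\bigr)\geqslant 0$ on $\supp(\nu)$, integrating gives $\int s^{k-1}\nu(\d s)\geqslant\lambda_{0}(\Omega)\int s^{k}\nu(\d s)$, i.e. $k\,T_{k-1}(\Omega)/T_{k}(\Omega)=\bigl(\int s^{k-1}\nu\bigr)\big/\bigl(\int s^{k}\nu\bigr)\geqslant\lambda_{0}(\Omega)$. For \eqref{moment1}: Cauchy--Schwarz gives $\bigl(\int s^{k}\nu\bigr)^{2}=\bigl(\int s^{1/2}\,s^{k-1/2}\,\nu\bigr)^{2}\leqslant\int s\,\nu\cdot\int s^{2k-1}\,\nu$, while $\lambda_{0}(\Omega)\int s\,\nu\leqslant\int\nu=\mu(\Omega)$; combining, $\lambda_{0}(\Omega)\bigl(\int s^{k}\nu\bigr)^{2}\leqslant\mu(\Omega)\int s^{2k-1}\nu$, which upon substituting $T_{k}(\Omega)=k!\int s^{k}\nu$ and $T_{2k-1}(\Omega)=(2k-1)!\int s^{2k-1}\nu$ is exactly $\lambda_{0}(\Omega)\leqslant\frac{(k!)^{2}}{(2k-1)!}\,\frac{T_{2k-1}(\Omega)}{T_{k}(\Omega)^{2}}\,\mu(\Omega)$.

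Finally, \eqref{k-moment} is bookkeeping. Iterating \eqref{moment3}, $T_{k}(\Omega)\leqslant\frac{k}{\lambda_{0}(\Omega)}T_{k-1}(\Omega)\leqslant\cdots\leqslant\frac{k!}{\lambda_{0}(\Omega)^{k}}T_{0}(\Omega)=\frac{k!\,\mu(\Omega)}{\lambda_{0}(\Omega)^{k}}$; then, for $0<\beta<\lambda_{0}(\Omega)$, Tonelli's theorem and the geometric series give $\mathbb E_{\mu}[\mathrm e^{\beta\tau_{\Omega}}]=\sum_{k\geqslant 0}\frac{\beta^{k}}{k!}T_{k}(\Omega)\leqslant\mu(\Omega)\sum_{k\geqslant 0}\bigl(\beta/\lambda_{0}(\Omega)\bigr)^{k}=\frac{\lambda_{0}(\Omega)}{\lambda_{0}(\Omega)-\beta}\mu(\Omega)=\bigl(1+\tfrac{\beta}{\lambda_{0}(\Omega)-\beta}\bigr)\mu(\Omega)$. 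I expect the one genuinely technical point to be the first paragraph rather than the inequalities: rigorously identifying the probabilistic object $u_{k}=\mathbb E_{\cdot}[\tau_{\Omega}^{k}]$ with $k!\,G_{\Omega}^{k}\mathbf 1_{\Omega}$ in the abstract Dirichlet-form setting --- matching $\mathbb E_{\cdot}[\tau_{\Omega}]$ with the $0$-potential $G_{\Omega}\mathbf 1_{\Omega}$ q.e., checking $L^{2}$-convergence of $\int_{0}^{\infty}P_{s}^{\Omega}u_{k-1}\,\d s$, and reconciling the q.e.\ and $\mu$-a.e.\ versions so that $\mu(u_{k})=k!\langle G_{\Omega}^{k}\mathbf 1_{\Omega},\mathbf 1_{\Omega}\rangle_{\mu}$ holds on the nose. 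Once that identification is secured, the rest is the elementary one-variable moment estimation above.
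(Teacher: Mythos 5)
Your proof is correct and takes a genuinely different, more unified route than the paper. The paper first proves the Poisson recursion (Lemma 2.1), then a variational formula for $1/\bigl(k\langle u_{k-1},u_k\rangle_2\bigr)$ (Proposition 2.2), then an iterated Dirichlet-form identity (Lemma 3.1) expressing $k\langle u_{k-1},u_k\rangle_2$ and $\langle u_k,u_k\rangle_2$ through $T_{2k-1}$ and $T_{2k}$; the inequalities \eqref{moment1}--\eqref{moment3} then come from testing the Rayleigh quotient with $u_k$, applying H\"older, and running the variational formula separately for even and odd indices. You instead collapse the entire algebraic apparatus into the spectral identity $T_k(\Omega)=k!\int_{0}^{1/\lambda_0(\Omega)}s^k\,\nu(\d s)$ with $\nu$ the spectral measure of the Green operator $G_\Omega$ at $\mathbf 1_\Omega$, after which \eqref{moment3} is the one-line observation $\int s^{k-1}(1-\lambda_0 s)\,\nu\geqslant 0$ and \eqref{moment1} is Cauchy--Schwarz on $\nu$ combined with $\lambda_0\int s\,\nu\leqslant\nu(\R_+)=\mu(\Omega)$; the Cauchy--Schwarz you use ($s^k=s^{1/2}\cdot s^{k-1/2}$) is not the one the paper uses (the paper applies $\left(\int u_k\d\mu\right)^2\leqslant\mu(\Omega)\int u_k^2\d\mu$, i.e.\ pairing $s^{2k}$ against $1$), so even the elementary step differs. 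What your approach buys: \eqref{moment3} holds for every $k$ by a single argument, rather than the paper's two-case derivation via $2kT_{2k-1}/T_{2k}$ and $(2k-1)T_{2k-2}/T_{2k-1}$, and the whole theorem is reduced to transparent moment estimates for a scalar measure on a bounded interval. What the paper's approach buys: Proposition 2.2 (the variational formula for the weighted exit-time moment), which is of independent interest and is used elsewhere. The technical point you correctly flag --- rigorously identifying $u_k=\mathbb E_\cdot[\tau_\Omega^k]$ with $k!\,G_\Omega^k\mathbf 1_\Omega$ q.e.\ and securing $L^2$-integrability --- is precisely the content of the paper's Lemma 2.1, which proves it by the transience of $(\mathcal E,\mathcal F^\Omega)$, the Green potential representation, and the exponential integrability of $\tau_\Omega$; your proposal is consistent with that lemma, so once it is invoked the rest of your argument is complete.
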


 \begin{remark}
Note that \cite[Propositon 2]{HMP16} obtains the similar bounds as Theorem \ref{main} for Brownian motion when $\Omega$ is a geodesic ball. Furthermore, \cite{DLM17} shows the similar result for diffusions on general bounded domain $\Omega$.
\end{remark}

To study the lower bound of $\lambda_0(\Omega)$, we need some other notations. Let $(P_t^\Omega)_{t\geq 0}$ be the transition semigroup of process $X$ killed upon leaving $\Omega$,  that is,
$$
P_t^\Omega(x,A)=\mathbb{P}_x(X_t\in A, t<\tau_\Omega), \quad x\in E,\ A\in \mathscr{B}(E).
$$
Denote by $(\mathcal{L}^\Omega,\mathscr{D}(\mathcal{L}^\Omega))$ the associated generator in $L^2(\mu)$. In fact, it is well known that $\lambda_0(\Omega)=-\sup\{\lambda:\ \lambda\in\sigma(\mathcal{L}^\Omega)\}$ where $\sigma(\mathcal{L}^\Omega)$ is the spectrum of $\mathcal{L}^\Omega$ in $L^2(\mu)$.

We have the following result while  $\mathcal{L}^\Omega$ possesses discrete spectrum (see \cite[Theorem 1.2]{GW02} for some criteria of discrete spectrum).
\begin{theorem}\label{main-3}
	Let $\Omega\subset E$ be a domain with  $\mu(\Omega)<\infty$ and $\lambda_0(\Omega)>0$. Assume that  $-\mathcal{L}^\Omega$ has discrete spectrum in $L^2(\mu)$, denoted by
$$
0< \lambda_{0}(\Omega)\leqslant\lambda_{1}(\Omega)\leqslant\lambda_{2}(\Omega)\leqslant \cdots.
$$
Let $\varphi_i,\ i\geqslant 0$ be the associated eigenfunctions. Then for any $k\geqslant 1$,
	$$
	T_k(\Omega)=k!\sum_{i=0}^\infty \frac{\mu(\varphi_i)^2}{(\lambda_i(\Omega))^k}
\geqslant k!\frac{\mu(\varphi_0)^2}{(\lambda_0(\Omega))^k}.
	$$
Thus,
\begin{equation}\label{lowerb}
\lambda_0(\Omega)\geqslant \left(\frac{k!\mu(\varphi_0)^2}{T_k(\Omega)}\right)^{1/k}.
\end{equation}
\end{theorem}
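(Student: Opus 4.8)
The plan is to expand the exit-time moments in the eigenbasis of $-\mathcal{L}^\Omega$. The key observation is that, for a function $f$ on $\Omega$, the mean exit time $u_1(x) := \mathbb{E}_x[\tau_\Omega]$ solves $-\mathcal{L}^\Omega u_1 = 1$ on $\Omega$ (in the appropriate weak/resolvent sense), and more generally the normalized higher moments $u_k(x) := \mathbb{E}_x[\tau_\Omega^k]/k!$ satisfy the recursion $-\mathcal{L}^\Omega u_k = u_{k-1}$ with $u_0 \equiv 1$. This can be seen either from the Dynkin/Markov-property identity $\mathbb{E}_x[\tau_\Omega^k] = \mathbb{E}_x\!\int_0^{\tau_\Omega} k\,\mathbb{E}_{X_s}[\tau_\Omega^{k-1}]\,\d s$, or from the semigroup formula $u_k = \int_0^\infty \frac{t^{k-1}}{(k-1)!} P_t^\Omega u_0\, \d t$, i.e. $u_k = (-\mathcal{L}^\Omega)^{-k}\mathbf{1}_\Omega$, which makes sense because $\lambda_0(\Omega)>0$ ensures the resolvent is well-behaved at $0$.

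First I would record that $-\mathcal{L}^\Omega = \sum_{i\geqslant 0}\lambda_i(\Omega)\,\varphi_i\otimes\varphi_i$ with $\{\varphi_i\}$ an orthonormal basis of $L^2(\mu)$, so that $(-\mathcal{L}^\Omega)^{-k}$ acts as multiplication by $\lambda_i(\Omega)^{-k}$ on the $i$-th coordinate. Applying this to $\mathbf{1}_\Omega$ and writing $\langle \mathbf{1}_\Omega,\varphi_i\rangle_\mu = \mu(\varphi_i)$ (here one must be a little careful: $\varphi_i$ vanishes q.e. off $\Omega$, so $\mu(\varphi_i) = \int_\Omega \varphi_i\,\d\mu = \langle \mathbf{1}_\Omega,\varphi_i\rangle_\mu$), one gets
$$
u_k = \sum_{i=0}^\infty \frac{\mu(\varphi_i)}{\lambda_i(\Omega)^k}\,\varphi_i \quad\text{in } L^2(\mu).
$$
Then
$$
T_k(\Omega) = \mathbb{E}_\mu[\tau_\Omega^k] = k!\int_\Omega u_k\,\d\mu = k!\,\langle u_k,\mathbf{1}_\Omega\rangle_\mu = k!\sum_{i=0}^\infty \frac{\mu(\varphi_i)^2}{\lambda_i(\Omega)^k},
$$
using Parseval. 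Dropping all terms but $i=0$ gives $T_k(\Omega)\geqslant k!\,\mu(\varphi_0)^2/\lambda_0(\Omega)^k$, and rearranging yields \eqref{lowerb}.

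The main obstacle is rigor in the functional-analytic bookkeeping: one must justify that $\mathbf{1}_\Omega$ (which need not be in $\mathscr{D}(\mathcal{L}^\Omega)$) lies in the domain of $(-\mathcal{L}^\Omega)^{-k}$ and that the series converges appropriately, and that $(-\mathcal{L}^\Omega)^{-k}\mathbf{1}_\Omega$ genuinely equals the probabilistic object $\mathbb{E}_\cdot[\tau_\Omega^k]/k!$. The cleanest route is the semigroup identity: since $P_t^\Omega$ is symmetric and, by $\lambda_0(\Omega)>0$ together with $\mu(\Omega)<\infty$, satisfies $\|P_t^\Omega \mathbf{1}_\Omega\|_{L^2(\mu)} \leqslant \mathrm{e}^{-\lambda_0(\Omega)t}\mu(\Omega)^{1/2}$, the time integral $\int_0^\infty \frac{t^{k-1}}{(k-1)!}P_t^\Omega\mathbf{1}_\Omega\,\d t$ converges in $L^2(\mu)$; writing $P_t^\Omega\mathbf{1}_\Omega(x) = \mathbb{P}_x(t<\tau_\Omega)$ and applying Fubini/Tonelli to $\mathbb{E}_x[\tau_\Omega^k] = \mathbb{E}_x\int_0^\infty k t^{k-1}\mathbf{1}_{\{t<\tau_\Omega\}}\,\d t = \int_0^\infty k t^{k-1}\mathbb{P}_x(t<\tau_\Omega)\,\d t$ identifies this integral with $u_k$ pointwise ($\mu$-a.e.). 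The spectral expansion of $P_t^\Omega$ then gives the coefficients $\mu(\varphi_i)\,\mathrm{e}^{-\lambda_i(\Omega)t}$, and integrating in $t$ produces $\mu(\varphi_i)/\lambda_i(\Omega)^k$; interchanging sum and integral is justified by monotone/dominated convergence since all terms with $\mu(\varphi_i)\geqslant 0$ can be handled by Tonelli after noting $\mu(\varphi_i)^2\geqslant 0$. Finiteness of $T_k(\Omega)$, needed for Theorem~\ref{main} anyway, is automatic here from the same exponential bound. The remaining inequality in \eqref{lowerb} is then immediate by positivity of the dropped terms and taking $k$-th roots.
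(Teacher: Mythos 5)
Your proof is correct and takes essentially the same route as the paper: both expand $\mathbb{E}_x[\tau_\Omega^k]$ via the identity $k\int_0^\infty s^{k-1}P_s^\Omega\mathbf{1}\,\mathrm{d}s$, apply the spectral decomposition of $P_s^\Omega$ (equivalently, your resolvent-power picture $u_k=(-\mathcal{L}^\Omega)^{-k}\mathbf{1}_\Omega$), integrate in $s$ to produce the factors $k!/\lambda_i(\Omega)^k$, and then pair against $\mathbf{1}_\Omega$ to get $T_k(\Omega)=k!\sum_i\mu(\varphi_i)^2/\lambda_i(\Omega)^k$ before dropping the $i>0$ terms. The only cosmetic difference is that you normalize $u_k$ by $k!$ and organize the Tonelli justification around the nonnegativity of $\mu(\varphi_i)^2$ after pairing with $\mathbf{1}_\Omega$ (your phrasing there is a bit tangled, but the intended and correct point is precisely that the interchange should be justified at the level of $T_k$ rather than pointwise in $x$, since $\mu(\varphi_i)$ can change sign), whereas the paper interchanges sum and integral at the pointwise level without comment.
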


 \begin{remark}
	
(1) We mention that when $X$ is a Brownian motion, the similar result as Theorem \ref{main-3} is presented in \cite[Theorem 1.3]{DLM17}.

(2) From \eqref{lowerb}, we could obtain a lower bound of the exponential moment of the exit time:

	$$	\mathbb{E}_\mu[\mathrm{e}^{\beta\tau_{\Omega}}]\geqslant\left(1+\frac{\beta}{\lambda_0(\Omega)-\beta}\right)\mu(\varphi_0)^2,
	$$
which is a extension of \cite[Corollary 1.6]{HKMW21+}.
\end{remark}

As an application of Theorems 1.1 and 1.3, we obtain the following result.

\begin{corollary}\label{limit}
	Let $\Omega\subset E$ be a domain with  $\mu(\Omega)<\infty$ and $\lambda_0(\Omega)>0$ Assume that  $-\mathcal{L}^\Omega$ has discrete spectrum in $L^2(\mu)$.  Then we have
	\begin{equation}\label{sup}
	\begin{split}
		\lambda_0(\Omega)&=\sup \left\{\lambda\geqslant 0: \limsup _{n \rightarrow \infty}\lambda^n \frac{T_n(\Omega)}{n!}<\infty\right\}.\\
        &=\inf \left\{\lambda \geqslant 0:  \liminf _{n \rightarrow \infty}\lambda^n \frac{T_n(\Omega)}{n!}>0  \right\}.
    \end{split}
\end{equation}
\end{corollary}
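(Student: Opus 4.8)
\textbf{Proof proposal for Corollary \ref{limit}.}

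The plan is to sandwich $\lambda_0(\Omega)$ between the two extremal values in \eqref{sup} by using the two-sided bounds on $T_n(\Omega)$ already established. Set $a := \sup\{\lambda\geqslant 0 : \limsup_n \lambda^n T_n(\Omega)/n! < \infty\}$ and $b := \inf\{\lambda\geqslant 0 : \liminf_n \lambda^n T_n(\Omega)/n! > 0\}$. I would first observe that both sets are intervals: if $\limsup_n \lambda^n T_n(\Omega)/n!<\infty$ and $0\leqslant\lambda'<\lambda$ then $\lambda'^n T_n(\Omega)/n! = (\lambda'/\lambda)^n\,\lambda^n T_n(\Omega)/n!\to 0$, so $a$ is well defined as a supremum over a left-interval; similarly the set defining $b$ is a right-interval. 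Hence it suffices to show $a\geqslant \lambda_0(\Omega)$, $b\leqslant\lambda_0(\Omega)$, together with $a\leqslant b$ forced by the chain $a\leqslant\lambda_0(\Omega)\leqslant b$ — actually I only need the two inequalities $a\geqslant\lambda_0(\Omega)$ and $b\leqslant\lambda_0(\Omega)$, since trivially $a\leqslant b$ is not needed: the two displayed equalities will both follow once I show $a=\lambda_0(\Omega)=b$.

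For the upper estimate on $T_n$, apply \eqref{k-moment} from Theorem \ref{main}: $T_n(\Omega)\leqslant n!\,\mu(\Omega)/\lambda_0(\Omega)^n$, so for any $\lambda<\lambda_0(\Omega)$ we get $\lambda^n T_n(\Omega)/n!\leqslant (\lambda/\lambda_0(\Omega))^n\mu(\Omega)\to 0$, whence $\limsup_n\lambda^n T_n(\Omega)/n!=0<\infty$. This shows every $\lambda<\lambda_0(\Omega)$ lies in the first set, so $a\geqslant\lambda_0(\Omega)$; and since $\lambda_0(\Omega)$ is the infimum of the second set's complement behaviour, the same inequality $\lambda^n T_n(\Omega)/n!\to 0$ for $\lambda<\lambda_0(\Omega)$ shows $\liminf_n\lambda^n T_n(\Omega)/n!=0$ for such $\lambda$, hence $b\geqslant\lambda_0(\Omega)$. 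For the reverse, invoke Theorem \ref{main-3}: $T_n(\Omega)\geqslant n!\,\mu(\varphi_0)^2/\lambda_0(\Omega)^n$, so for any $\lambda>\lambda_0(\Omega)$, $\lambda^n T_n(\Omega)/n!\geqslant (\lambda/\lambda_0(\Omega))^n\mu(\varphi_0)^2\to\infty$ (note $\mu(\varphi_0)^2>0$ since $\varphi_0$ is a nontrivial eigenfunction, and we may take $\varphi_0>0$ on $\Omega$ by irreducibility/Perron--Frobenius for the killed semigroup, or at least $\mu(\varphi_0)\neq 0$ which is all that is needed). Thus $\limsup_n\lambda^n T_n(\Omega)/n!=\infty$ and $\liminf_n\lambda^n T_n(\Omega)/n!=\infty>0$ for every $\lambda>\lambda_0(\Omega)$, giving $a\leqslant\lambda_0(\Omega)$ and $b\leqslant\lambda_0(\Omega)$. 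Combining, $a=b=\lambda_0(\Omega)$, which is exactly \eqref{sup}.

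The only delicate point — and the one I would flag as the main obstacle — is justifying $\mu(\varphi_0)\neq 0$, which is implicitly needed for the lower bound in Theorem \ref{main-3} to be nontrivial and hence for the $b\leqslant\lambda_0(\Omega)$ half of the argument. If $\mu(\varphi_0)=0$ were possible the lower bound would degenerate; one rules this out by the standard fact that, for a regular symmetric Dirichlet form with the killed semigroup irreducible on $\Omega$, the ground-state eigenfunction $\varphi_0$ can be chosen strictly positive $\mu$-a.e. on $\Omega$, so $\mu(\varphi_0)>0$. If the paper does not wish to assume irreducibility, a cleaner route is to use the series representation $T_k(\Omega)=k!\sum_{i\geqslant 0}\mu(\varphi_i)^2/\lambda_i(\Omega)^k$ directly: letting $j$ be the smallest index with $\mu(\varphi_j)\neq 0$, one has $\lambda^k T_k(\Omega)/k!\sim (\lambda/\lambda_j(\Omega))^k\mu(\varphi_j)^2$ as $k\to\infty$, which blows up for $\lambda>\lambda_j(\Omega)\geqslant\lambda_0(\Omega)$; combined with the upper bound this still pins $a$ and $b$ down, but only to $\lambda_j(\Omega)$ rather than $\lambda_0(\Omega)$ unless $j=0$. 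So the honest statement requires $\mu(\varphi_0)\neq 0$, and I would state that hypothesis (or the irreducibility that implies it) explicitly if it is not already subsumed in the standing assumptions. Everything else is the elementary geometric-series comparison sketched above.
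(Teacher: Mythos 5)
Your proof is correct and follows essentially the same route as the paper: sandwiching via the upper bound $T_n(\Omega)\leqslant n!\,\mu(\Omega)/\lambda_0(\Omega)^n$ from Theorem \ref{main} and the lower bound $T_n(\Omega)\geqslant n!\,\mu(\varphi_0)^2/\lambda_0(\Omega)^n$ from Theorem \ref{main-3}, then comparing geometric factors $(\lambda/\lambda_0(\Omega))^n$. The one place you are actually more careful than the paper is in flagging that $\mu(\varphi_0)\neq 0$ must be justified (via positivity of the ground state under irreducibility); the paper simply asserts $0<\mu(\varphi_0)^2$ without comment.
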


\begin{remark}
Note that when the process $X$ is a Brownian motion, \eqref{sup} is obtained by \cite[Theorem 1]{HMP16}.
\end{remark}

	
	
		



The remainder of the paper is structured as follows. In the next section, we present the relation of exit time and Poisson equation. From it we obtain variational formulas for the exit times. Section \ref{proofs} is devoted to proofs of the main results. In section \ref{examples}, we provide some examples.

\section{Exit time, Poisson equation and variational formula}\label{auxiliary}
In this section, we will do some preparations, which are helpful in the proof of our main results.

Recall that $X:=(X_t)_{t\geq 0}$ is a symmetric Markov process on Polish space $(E,d,\mu)$, with the generator $(\mathcal{L},\scr{D}(\mathcal{L}))$ in $L^2(\mu)$  and regular Dirichlet form  $(\mathcal{E},\mathcal{F})$. For a domain $\Omega\subset E$, define
$$
\mathcal{F}^\Omega=\left\{f\in\mathcal{F}:\ \widetilde{f}=0\ \text{q.e. on }\Omega^c\right\}.
$$

Now fix a domain $\Omega\subset E$ and a function $\xi\in \mathcal{F}^\Omega$.
Consider the following Poisson equation corresponding to $\cal L$:
\begin{equation}\label{poi}
\begin{cases}
	-\mathcal{L}u=\xi ,&\q \text{in }\Omega;\\
	u=0,&\q \text{q.e. on }\Omega^c.
\end{cases}
\end{equation}
We call function $u\in\mathcal{F}^\Omega$ is a weak solution to \eqref{poi} if
$$
\mathcal{E}(u, f)=\lan \xi,f\ran_2:=\int_\Omega\xi f\d\mu\q \text{for all }f\in\mathcal{F}^\Omega.
$$
In fact, if $\mu(\Omega)<\infty$ and $\lambda_0(\Omega) >0$,  then by the definition of $\lambda_{0}(\Omega)$, for $h:=(\lambda_{0}(\Omega)/\mu(\Omega))^{1/2}$,
$$\int_{\Omega}|f|h\d \mu\leqslant\sqrt{\mu(f^2\mathbf{1}_\Omega)}\sqrt{\mu(h^2\mathbf{1}_\Omega)}\leqslant\sqrt{\lambda_{0}(\Omega)^{-1}\mathcal{E}(f,f)}\sqrt{\lambda_{0}(\Omega)}=\sqrt{\mathcal{E}(f,f)}
$$
for all $f\in \mathcal{F}^\Omega$, which implies the Dirichlet form $(\mathcal{E},\mathcal{F}^\Omega)$ is transient (see \cite[(1.5.6)]{MYM11}).
Therefore, from \cite[Theorem 1.3.9]{O13}, one could find   \eqref{poi} exists a unique weak solution
$$
G^\Omega\xi(x):=\int_\Omega \xi(y)G^\Omega(x,\d y)=\mathbb{E}_x\int_0^{\tau_\Omega} \xi(X_t)\d t,\quad x\in E,
$$
under the assumpation $\lan \xi, G^\Omega\xi\ran_2<\infty$,
where $G^\Omega(x,\d y)$ is the Green potential measure of process $X$ killed upon $\Omega$, that is,
$$
G^\Omega(x,\d y):=\int_0^\infty \mathbb{P}_x(X_t\in \d y, t<\tau_\Omega)\d t.
$$

Using the above analysis, the following lemma tells us that the $k$-th moments of the exit time satisfy some Poisson equations.

\begin{lemma}\label{exit-poisson}
 Let $\Omega\subset E$ be a domain with  $\mu(\Omega)<\infty$ and $\lambda_0(\Omega)>0$. Define $\{u_k\}_{k\geqslant 0}$ with $u_0=1$ and for $k\geqslant 1$,
$$u_k(x):=k\int_\Omega u_{k-1}(y)G^\Omega(x,\d y),\quad x\in E.$$
Then for each $k\geqslant 1$, $u_k(x)=\mathbb{E}_x\tau_\Omega^k,x\in E$. Moreover, it is the weak solution of Poisson equation
\begin{equation}\label{poisson}
\begin{cases}
	-\mathcal{L}u_k=ku_{k-1},&\q \text{in }\Omega;\\
	u_k=0,&\q \text{q.e. on }\Omega^c.
\end{cases}
\end{equation}

\end{lemma}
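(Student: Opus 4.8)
The plan is to prove the two assertions of Lemma~\ref{exit-poisson} together by induction on $k$, using the probabilistic representation of the Green operator and the well-posedness of the Poisson equation established just above the statement. The base case $k=0$ is the trivial identity $u_0 = 1 = \mathbb{E}_x\tau_\Omega^0$. For the inductive step, assume $u_{k-1}(x) = \mathbb{E}_x\tau_\Omega^{k-1}$ for all $x$; I must show first that $u_k(x) = k\,G^\Omega u_{k-1}(x) = \mathbb{E}_x\tau_\Omega^k$, and second that this $u_k$ lies in $\mathcal{F}^\Omega$ and solves \eqref{poisson} weakly. The identification of $u_k$ as a Green potential with $\xi = k u_{k-1}$ then lets me invoke the already-cited fact that $G^\Omega\xi = \mathbb{E}_\cdot\int_0^{\tau_\Omega}\xi(X_t)\,\d t$ is the unique weak solution of \eqref{poi}, provided $\langle k u_{k-1}, G^\Omega(k u_{k-1})\rangle_2 = \langle u_{k-1}, u_k\rangle_2 < \infty$; this finiteness will itself follow from the moment bound \eqref{k-moment} of Theorem~\ref{main} (or can be folded into the induction).

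\textbf{The probabilistic identity.} The heart of the argument is the formula
\[
\mathbb{E}_x\tau_\Omega^k \;=\; k\,\mathbb{E}_x\!\int_0^{\tau_\Omega}\! \big(\mathbb{E}_{X_t}\tau_\Omega^{k-1}\big)\,\d t
\;=\; k\int_\Omega \big(\mathbb{E}_y\tau_\Omega^{k-1}\big)\,G^\Omega(x,\d y).
\]
To see the first equality I would start from the elementary identity $\tau_\Omega^k = k\int_0^{\tau_\Omega}(\tau_\Omega - t)^{k-1}\,\d t$, then apply the Markov property at time $t$ on the event $\{t < \tau_\Omega\}$, noting that on that event $\tau_\Omega - t = \tau_\Omega \circ \theta_t$, so that $\mathbb{E}_x[(\tau_\Omega-t)^{k-1}\mathbf 1_{\{t<\tau_\Omega\}}\mid \mathcal{F}_t] = \mathbf 1_{\{t<\tau_\Omega\}}\,\mathbb{E}_{X_t}\tau_\Omega^{k-1}$. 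Taking expectations, using Fubini (justified by nonnegativity), and then the inductive hypothesis gives the claim; the second equality is just the definition of $G^\Omega(x,\d y) = \int_0^\infty \mathbb{P}_x(X_t\in\d y,\,t<\tau_\Omega)\,\d t$ together with Fubini. This shows $u_k(x) = \mathbb{E}_x\tau_\Omega^k$ and simultaneously that $u_k = G^\Omega(k u_{k-1})$.

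\textbf{The weak-solution claim and the main obstacle.} Once $u_k$ is identified as $G^\Omega\xi$ with $\xi = k u_{k-1} \in \mathcal{F}^\Omega$ (note $u_{k-1}\in\mathcal{F}^\Omega$ by the previous step of the induction, being itself a Green potential when $k\geq 2$, and $u_0=1\in\mathcal{F}^\Omega$ needs $\mu(\Omega)<\infty$ so that constants restricted to $\Omega$ lie in $\mathcal{F}^\Omega$), the conclusion that $u_k\in\mathcal{F}^\Omega$ and $\mathcal{E}(u_k,f) = \langle k u_{k-1}, f\rangle_2$ for all $f\in\mathcal{F}^\Omega$ is exactly the content of \cite[Theorem 1.3.9]{O13} as quoted in the text, conditional on $\langle\xi, G^\Omega\xi\rangle_2 = k\langle u_{k-1}, u_k\rangle_2 = \mu(u_{k-1}u_k) < \infty$. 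I expect the main obstacle to be precisely the verification of this integrability condition and the membership $u_{k-1}\in L^2(\mu)$, which must be threaded carefully through the induction so as not to create a circular dependence on Theorem~\ref{main}; the clean route is to bound $\mu(u_{k-1}u_k)$ directly using transience of $(\mathcal{E},\mathcal{F}^\Omega)$ and the displayed inequality $\int_\Omega|f|h\,\d\mu\leq\sqrt{\mathcal{E}(f,f)}$, which gives $L^1$-control, and to upgrade to $L^2$ via the resolvent bound $\|G^\Omega\xi\|_2 \leq \lambda_0(\Omega)^{-1}\|\xi\|_2$ that follows from $\lambda_0(\Omega) = -\sup\sigma(\mathcal{L}^\Omega) > 0$. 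With $\|u_k\|_2 \leq (k/\lambda_0(\Omega))\|u_{k-1}\|_2$ and $\|u_0\|_2 = \mu(\Omega)^{1/2} < \infty$, all the integrability needed at each stage is in hand, closing the induction.
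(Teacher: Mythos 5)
Your overall structure—induction, a Green-potential identity, then the weak-solution claim via integrability—mirrors the paper's, and two of your ingredients are in fact cleaner than what the paper does. The identity $\tau_\Omega^k = k\int_0^{\tau_\Omega}(\tau_\Omega - t)^{k-1}\,\d t$ combined with the Markov property at a fixed time $t$ gives $u_k = kG^\Omega u_{k-1} = \mathbb{E}_\cdot\tau_\Omega^k$ in one stroke, whereas the paper rewrites via the survival function and shuffles a triple integral. Likewise, your $L^2$-control via the resolvent bound $\|G^\Omega\|_{L^2\to L^2}\leq\lambda_0(\Omega)^{-1}$ (hence $\|u_k\|_2 \leq k\lambda_0(\Omega)^{-1}\|u_{k-1}\|_2$) is a self-contained spectral-theory argument, whereas the paper imports an exponential-moment estimate from another reference to conclude $u_k\in L^2(\mu)$.

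However, there is a genuine gap in the weak-solution step. You route through the citation of \cite[Theorem 1.3.9]{O13}, which in the paper's formulation of \eqref{poi} requires the source term $\xi$ to lie in $\mathcal{F}^\Omega$; to make the base case $k=1$ work you then assert that $u_0 = \mathbf{1}_\Omega \in \mathcal{F}^\Omega$ because $\mu(\Omega)<\infty$. That is false in general: for Brownian motion on a bounded domain $\Omega\subset\mathbb{R}^d$, $\mathcal{F}^\Omega = W_0^{1,2}(\Omega)$ and $\mathbf{1}_\Omega\notin W_0^{1,2}(\Omega)$. Finiteness of $\mu(\Omega)$ gives $\mathbf{1}_\Omega\in L^2(\mu)$, not membership in the Dirichlet space with zero boundary trace. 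The paper avoids this pitfall by invoking \cite[Theorem 2.1.12]{CF12} instead, whose hypothesis is only that $\langle u_k, G^\Omega u_k\rangle_2<\infty$ (equivalently $\langle u_k, u_{k+1}\rangle_2<\infty$), with no requirement that the source $ku_{k-1}$ itself belong to $\mathcal{F}^\Omega$. To repair your argument, drop the claim $u_0\in\mathcal{F}^\Omega$ and use a potential-theoretic statement of the CF12 type: transience plus $\langle \xi, G^\Omega\xi\rangle_2<\infty$ already yields $G^\Omega\xi\in\mathcal{F}^\Omega$ and $\mathcal{E}(G^\Omega\xi,f)=\langle\xi,f\rangle_2$ for all $f\in\mathcal{F}^\Omega$; your resolvent bound then supplies exactly the needed $\langle u_{k-1},u_k\rangle_2<\infty$.
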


\begin{proof}

First, we prove $u_k=(\mathbb{E}_x\tau_\Omega^k:x\in E),\ k\geqslant 1$ by induction.
In fact, when  $k=1$,
$$
u_1(x)=\int_\Omega G^\Omega(x,\d y)=\int_{0}^{\infty}\mathbb{P}_x(\tau_\Omega>t)\d t=\mathbb{E}_x\tau_{\Omega}.
$$

Assume now that for $k=n\geqslant 1$, the assertion holds. Then for $k=n+1$,
\begin{equation}\label{u_{k+1}}
	\begin{split}
		u_{n+1}(x)&=(n+1)\int_\Omega u_{n}(y)G^\Omega(x,\d y)=(n+1)\int_\Omega \mathbb{E}_y\tau_{\Omega}^nG^\Omega(x,\d y)\\
		&=(n+1)\int_0^\infty\int_\Omega\mathbb{E}_y\tau_{\Omega}^n\mathbb{P}_x(X_t\in \d y,t<\tau_\Omega)\d t\\
		&=n(n+1)\int_0^\infty\int_\Omega\int_0^\infty r^{n-1}\mathbb{P}_y(\tau_{\Omega}>r)\d r\mathbb{P}_x(X_t\in \d y,t<\tau_\Omega)\d t.
	\end{split}
\end{equation}
Note that by the strong Markov property, for any $t,r>0$ we have
\begin{align*}
		\mathbb{P}_x(\tau_\Omega>t+r)&=\mathbb{E}_x[\mathbf{1}_{\{\tau_\Omega>t\}}\mathbb{P}_{X_t}(\tau_\Omega>r)]=\int_{\Omega}\mathbb{P}_y(\tau_{\Omega}>r)\mathbb{P}_x[X_t\in \d y,\tau_{\Omega}>t].
\end{align*}
Applying the above analysis to \eqref{u_{k+1}} gives that
\begin{equation*}
	\begin{split}
		u_{n+1}(x)&=n(n+1)\int_0^\infty\int_0^\infty r^{n-1}	\mathbb{P}_x(\tau_\Omega>t+r)\d r\d t\\
		&=n(n+1)\int_0^\infty\int_r^\infty r^{n-1}	\mathbb{P}_x[\tau_\Omega>s]\d r\d s\\
		&=(n+1)\int_0^\infty\int_0^s r^{n-1}	\mathbb{P}_x(\tau_\Omega>s)\d r\d s\\
		&=\int_0^\infty(n+1) s^n	\mathbb{P}_x(\tau_\Omega>s)\d s=\mathbb{E}_x\tau_{\Omega}^{n+1}.
	\end{split}
\end{equation*}

Since $\lambda_0(\Omega)>0$, it is clear that   the Dirichlet space $(\mathcal{E},\mathcal{F}^\Omega)$ is transient. Note that by \cite[Lemma 3.1]{HKMW21+}, for $\beta<\lambda_{0}(\Omega)$,  $\mathbb{E}_x\mathrm{e}^{\beta\tau_\Omega}\in L^2(\mu)$  for all $x\in E$. Therefore, $u_k=\mathbb{E}_x[\tau_\Omega^k]\in L^2(\mu)$, which implies that $\lan u_k, G^\Omega u_k\ran_2=(k+1)^{-1}\lan u_k,  u_{k+1}\ran_2<\infty.$ Thus  by \cite[Theorem 2.1.12]{CF12},  we see that $u_k\in\mathcal{F}^\Omega$ and
\begin{equation}\label{diedai}
\mathcal{E}(u_k,v)=k\lan u_{k-1},f\ran_2 \quad\text{for all }f\in\mathcal{F}^\Omega.
\end{equation}
Hence, $u_k$ is the weak solution of \eqref{poisson}.
\end{proof}

From Lemma \ref{exit-poisson} and the proof of \cite[Theorem 2.1]{HKMW21+}, we obtain the following variational formulas for the $k$-th moments of the exit time directly.

\begin{proposition}\lb{VF}
	Let $\Omega\subset E$ be a domain with $\mu(\Omega)<\infty$ and $\lambda_0(\Omega)>0$, and $\{u_k\}_{k\geqslant 0}$ be defined as in Lemma \ref{exit-poisson}. Then for any $k\geqslant 1$,
\begin{equation*}
	\frac{1}{k\lan u_{k-1},u_k\ran_2}= \inf_{f\in \cM_{\Omega,k}}\mathcal{E}(f,f),
\end{equation*}
where $
\cM_{\Omega,k}:=\{f\in\mathcal{F}^\Omega:k\lan u_{k-1},f\ran_2=1\}.
$
\end{proposition}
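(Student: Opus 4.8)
The plan is to recognize this as a standard Dirichlet-principle computation: by Lemma \ref{exit-poisson} the function $u_k \in \mathcal{F}^\Omega$ is the weak solution of $-\mathcal{L}u_k = k u_{k-1}$, which by \eqref{diedai} means $\mathcal{E}(u_k, f) = k\langle u_{k-1}, f\rangle_2$ for every $f \in \mathcal{F}^\Omega$. In particular, testing against $f = u_k$ gives $\mathcal{E}(u_k, u_k) = k\langle u_{k-1}, u_k\rangle_2$, so $u_k$ itself, after rescaling by the constant $c_k := 1/(k\langle u_{k-1}, u_k\rangle_2)$, lies in the constraint set $\cM_{\Omega,k}$ (note $\langle u_{k-1}, u_k\rangle_2 > 0$ since $u_{k-1}, u_k \geqslant 0$ and are not identically zero when $\lambda_0(\Omega) > 0$, so $c_k$ is well-defined and positive). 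This produces the candidate minimizer and shows the infimum is at most $\mathcal{E}(c_k u_k, c_k u_k) = c_k^2 \cdot k\langle u_{k-1}, u_k\rangle_2 = c_k = 1/(k\langle u_{k-1}, u_k\rangle_2)$.

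For the matching lower bound, I would take an arbitrary $f \in \cM_{\Omega,k}$, so that $k\langle u_{k-1}, f\rangle_2 = 1$, and apply the Cauchy--Schwarz inequality with respect to the bilinear form $\mathcal{E}$ on $\mathcal{F}^\Omega$ (which is nonnegative-definite). Using \eqref{diedai} with this $f$,
\begin{equation*}
1 = k\langle u_{k-1}, f\rangle_2 = \mathcal{E}(u_k, f) \leqslant \sqrt{\mathcal{E}(u_k,u_k)}\,\sqrt{\mathcal{E}(f,f)} = \sqrt{k\langle u_{k-1}, u_k\rangle_2}\,\sqrt{\mathcal{E}(f,f)},
\end{equation*}
so $\mathcal{E}(f,f) \geqslant 1/(k\langle u_{k-1}, u_k\rangle_2)$. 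Taking the infimum over $f \in \cM_{\Omega,k}$ gives the reverse inequality, and combining the two bounds yields the claimed identity.

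The argument is essentially routine once the pieces from Lemma \ref{exit-poisson} are in place; the only point requiring a little care is the verification that $\langle u_{k-1}, u_k\rangle_2$ is strictly positive and finite, so that the normalization is legitimate and the constraint set $\cM_{\Omega,k}$ is nonempty. Finiteness follows from the membership $u_k \in L^2(\mu)$ established in the proof of Lemma \ref{exit-poisson} (via $\mathbb{E}_x e^{\beta\tau_\Omega} \in L^2(\mu)$ for $\beta < \lambda_0(\Omega)$) together with Cauchy--Schwarz, and strict positivity follows because $u_k = \mathbb{E}_\cdot[\tau_\Omega^k]$ is strictly positive $\mu$-a.e. on $\Omega$ (which has positive measure). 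I do not anticipate a genuine obstacle here; the proposition is stated as following "directly" from Lemma \ref{exit-poisson} and the cited proof of \cite[Theorem 2.1]{HKMW21+}, and the Dirichlet-principle/Cauchy--Schwarz mechanism above is exactly that.
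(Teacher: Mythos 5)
Your proof is correct and is essentially the Dirichlet-principle argument that the paper is invoking when it says the proposition follows "directly" from Lemma \ref{exit-poisson} and the proof of \cite[Theorem 2.1]{HKMW21+}: the weak formulation \eqref{diedai} identifies $\mathcal{E}(u_k,\cdot)$ with $k\langle u_{k-1},\cdot\rangle_2$ on $\mathcal{F}^\Omega$, so a rescaled $u_k$ attains the infimum, and Cauchy--Schwarz for the nonnegative bilinear form $\mathcal{E}$ gives the matching lower bound. Your additional remarks on the positivity and finiteness of $\langle u_{k-1},u_k\rangle_2$ are appropriate and fill in exactly what the paper leaves implicit.
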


\section{Proofs}\label{proofs}
To prove our main results, we need the following lemma.
\begin{lemma}
Let $\Omega\subset E$ be a domain with $\mu(\Omega)<\infty$ and $\lambda_0(\Omega)>0$, and $\{u_k\}_{k\geqslant 0}$ be defined as in Lemma \ref{exit-poisson}. Denote $T_k(\Omega)=\int_\Omega u_k(x)\mu(\d x)$. Then for any $k\geqslant 1$,
\begin{equation}\label{iterate}
k\lan u_{k-1},u_k\ran_2=\frac{(k !)^{2}}{(2 k-1) !} T_{2k-1}(\Omega)
\quad \text{and}\quad
	\lan u_{k},u_k\ran_2=\frac{(k !)^{2}}{(2 k) !} T_{2k}(\Omega).
\end{equation}
\end{lemma}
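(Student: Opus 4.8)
The plan is to exploit the symmetry of the Dirichlet form together with the recursive structure of $\{u_k\}$ in order to transfer an index from one slot of the inner product to the other, and thereby reduce every inner product $\lan u_p,u_q\ran_2$ to the plain integrals $T_m(\Omega)$. Throughout I write $a_{p,q}:=\lan u_p,u_q\ran_2$. Recall from Lemma~\ref{exit-poisson} that for $k\geqslant 1$ each $u_k$ lies in $\mathcal{F}^\Omega\cap L^2(\mu)$ and satisfies the weak-solution identity $\mathcal{E}(u_k,v)=k\lan u_{k-1},v\ran_2$ for all $v\in\mathcal{F}^\Omega$, while $u_0=1$. Since $u_m\in L^2(\mu)$ for every $m$, Cauchy--Schwarz shows $a_{p,q}$ is finite for all $p,q$, and $T_m(\Omega)=\lan 1,u_m\ran_2\leqslant\mu(\Omega)^{1/2}\|u_m\|_2<\infty$, so every quantity below is finite and the manipulations are legitimate.

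The first step is to establish the shift identity
\[
a_{p,q}=\frac{p}{q+1}\,a_{p-1,q+1},\qquad p\geqslant 1,\ q\geqslant 0.
\]
To prove it, I apply \eqref{diedai} with $k=p$ and test function $v=u_{q+1}\in\mathcal{F}^\Omega$, giving $\mathcal{E}(u_p,u_{q+1})=p\,a_{p-1,q+1}$; and I apply \eqref{diedai} again with $k=q+1$ and test function $v=u_p\in\mathcal{F}^\Omega$, giving $\mathcal{E}(u_{q+1},u_p)=(q+1)\,a_{q,p}=(q+1)\,a_{p,q}$. Since $\mathcal{E}$ is symmetric, the two left-hand sides are equal, which is precisely the claimed identity. (Equivalently, one could use the representation $u_k=kG^\Omega u_{k-1}$ and the $\mu$-symmetry of the kernel $G^\Omega(x,\d y)$ of the killed process.)

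Then I iterate, using that $a_{0,m}=\lan u_0,u_m\ran_2=T_m(\Omega)$. Starting from $a_{k,k}$ and applying the shift identity $k$ times, lowering the first index from $k$ down to $0$, the coefficients telescope:
\[
a_{k,k}=\frac{k}{k+1}\cdot\frac{k-1}{k+2}\cdots\frac{1}{2k}\,a_{0,2k}=\frac{k!}{(k+1)(k+2)\cdots(2k)}\,T_{2k}(\Omega)=\frac{(k!)^{2}}{(2k)!}\,T_{2k}(\Omega),
\]
which is the second formula in \eqref{iterate}. In the same way, starting from $a_{k-1,k}$ and applying the identity $k-1$ times,
\[
a_{k-1,k}=\frac{k-1}{k+1}\cdot\frac{k-2}{k+2}\cdots\frac{1}{2k-1}\,a_{0,2k-1}=\frac{(k-1)!}{(k+1)\cdots(2k-1)}\,T_{2k-1}(\Omega)=\frac{(k-1)!\,k!}{(2k-1)!}\,T_{2k-1}(\Omega),
\]
so multiplying by $k$ gives $k\,a_{k-1,k}=\dfrac{(k!)^{2}}{(2k-1)!}\,T_{2k-1}(\Omega)$, the first formula. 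The case $k=1$ is checked directly: $1\cdot a_{0,1}=T_1(\Omega)$ and $a_{1,1}=\tfrac12 a_{0,2}=\tfrac12 T_2(\Omega)$.

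The proof is essentially bookkeeping once the shift identity is available; the only points that need care are that \eqref{diedai} is invoked only with admissible test functions — namely each $u_j$ with $j\geqslant 1$, which Lemma~\ref{exit-poisson} guarantees lies in $\mathcal{F}^\Omega$ — and that the final step reaching $a_{0,m}$ pairs $u_m$ against $u_1$ and so never requires $u_0=1$ to belong to $\mathcal{F}^\Omega$. The one genuinely substantive idea is the use of the symmetry of $\mathcal{E}$ to produce the shift identity; after that, the result follows by evaluating the two telescoping products $\prod_{j=1}^{k}\frac{k+1-j}{k+j}$ and $\prod_{j=1}^{k-1}\frac{k-j}{k+j}$.
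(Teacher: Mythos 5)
Your proof is correct and follows essentially the same route as the paper: the shift identity $a_{p,q}=\tfrac{p}{q+1}a_{p-1,q+1}$ is exactly what the paper iterates (via $\lan u_k,u_{k-1}\ran_2=\tfrac{1}{k+1}\mathcal{E}(u_{k+1},u_{k-1})=\tfrac{k-1}{k+1}\lan u_{k-2},u_{k+1}\ran_2$), and both arguments telescope down to $a_{0,m}=T_m(\Omega)$. Your write-up is a bit more explicit about admissibility of test functions and about why $u_0=1\notin\mathcal{F}^\Omega$ causes no trouble, but the underlying argument is identical.
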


\begin{proof}
For fixed $k\geqslant 1$, we apply \eqref{diedai} to $u_{k+1}$ and $u_{k-1}$, and obtain that
\begin{equation*}
	\begin{split}
		\lan u_k,u_{k-1}\ran_2&=\frac{1}{k+1}\mathcal{E}(u_{k+1},u_{k-1})
		=\frac{k-1}{k+1}\lan u_{k-2},u_{k+1}\ran_2.
	\end{split}
\end{equation*}
Iterating this procedure, we have
$$\lan u_k,u_{k-1}\ran_2=\frac{(k-1)!k !}{(2 k-1) !} \int_{\Omega} u_{2 k-1} \d \mu,$$
which implies the first equality in \eqref{iterate}.

Similarly, from \eqref{diedai},
\begin{equation*}\label{iterate2}
		\lan u_{k},u_k\ran_2=\frac{1}{k+1}\mathcal{E}(u_{k+1},u_{k})=\frac{k}{k+1}\lan u_{k-1},u_{k+1}\ran_2,\\
\end{equation*}
we can complete the proof by iterating.
\end{proof}

We are now ready to prove Theorem \ref{main}.

\noindent{\bf Proof of Theorem \ref{main}.}
	By \eqref{diedai} and \eqref{iterate}, we see that
$$
	\mathcal{E}(u_k,u_k)=k\lan u_{k-1},u_k\ran_2=\frac{(k !)^{2}}{(2 k-1) !} T_{2k-1}(\Omega).
$$
Therefore, taking $f=u_k$ in \eqref{Diri-eigen}, we get
\begin{equation}\label{upperb1}
\lambda_{0}(\Omega)\leqslant\frac{\mathcal{E}(u_k,u_k)}{\int_{\Omega} u_{k}^{2} \d \mu}
\leqslant \frac{(k !)^{2}}{(2 k-1) !}  \frac{T_{2 k-1}(\Omega)}{\int_{\Omega} u_{k}^{2} \d \mu}
\leqslant \frac{(k !)^{2}}{(2 k-1) !} \frac{T_{2 k-1}(\Omega)}{T_{k}^{2}(\Omega)}\mu(\Omega),
\end{equation}
where in the last inequality we used the H\"older inequality
$$
T_k^2(\Omega)=\left(\int_\Omega u_k\d \mu\right)^2\leq \mu(\Omega)\int_\Omega u_k^2\d \mu.
$$

On the other hand, if we apply the second equality in \eqref{iterate} to \eqref{upperb1}, we also have that
\begin{equation}\label{moment2}
\lambda_{0}(\Omega)\leqslant\frac{\frac{(k !)^{2}}{(2 k-1) !} T_{2 k-1}(\Omega)}{\frac{(k !)^{2}}{(2 k) !} T_{2 k}(\Omega)  }=2 k \frac{T_{2 k-1}(\Omega)}{T_{2 k}(\Omega)}.
\end{equation}

To complete the proof of \eqref{moment3}, we need the variational formulas in Proposition \ref{VF}. Specifically, for fixed $k\geq 1$ and $f\in \mathcal{M}_{\Omega,k}$,  by H\"older inequality,
\begin{equation}
 1=\lan k u_{k-1},f\ran_2^2\leqslant k^2\mu(u_{k-1}^2)\mu(f^2).
\end{equation}
Thus from \eqref{Diri-eigen} we have
$$
\lambda_{0}(\Omega)\leqslant\frac{\mathcal{E}(f,f)}{\mu(f^2)}\leqslant \mathcal{E}(f,f)k^2\mu(u_{k-1}^2).$$
According to the arbitrariness of $f\in \cM_{\Omega,k}$  and Proposition \ref{VF},
\begin{equation*}
	\lambda_{0}(\Omega)\leqslant\frac{k^2\mu( u_{k-1}^2)}{k\lan u_{k-1},u_k\ran_2}=\frac{k\mu( u_{k-1}^2)}{\lan u_{k-1},u_k\ran_2}.
\end{equation*}
Combining this with \eqref{iterate} yields
\begin{equation}\label{moment4}
\lambda_{0}(\Omega)\leqslant\frac{k\frac{((k-1) !)^{2}}{(2 k-2) !} T_{2k-2}(\Omega)}{\frac{(k !)^{2}}{k(2 k-1) !} T_{2k-1}(\Omega)}=(2k-1)\frac{T_{2k-2}(\Omega)}{T_{ 2k-1}(\Omega)}.
\end{equation}
Therefore, \eqref{moment3} is obtained by \eqref{moment2} and \eqref{moment4}. By iterating and the Taylor expansion of function $\text{e}^{\beta x}$, we could obtain \eqref{k-moment} immediately. \quad \qed

\noindent{\bf Proof of Theorem \ref{main-3}.}
By the assumption that $-\mathcal{L}^\Omega$ possesses the discrete spectrum $0<\lambda_0(\Omega)\leqslant\lambda_1(\Omega)\leqslant \cdots$, and the fact $P_t^{\Omega}=\text{e}^{t\mathcal{L}^\Omega}$, we see that $P_t^{\Omega}$ admits a spectral decomposition:
$$
	P_t^{\Omega} f=\sum_{i=0}^\infty \text{e}^{-\lambda_i(\Omega)t}\mu(\varphi_{i}f)\varphi_{i}\quad
	\text{for }f\in L^2(\mu)\  \text{with } f=0\ \mu\text{-a.e. on }\Omega.
$$
Together this with the proof of  Lemma \ref{exit-poisson}, for any $k\geqslant 1$ we get that
	\begin{align*}
		\mathbb{E}_x[\tau_\Omega^k]&=k \int_0^\infty s^{k-1}\mathbb{P}_x(\tau_\Omega>s)\d s
		=k \int_0^\infty s^{k-1} P_s^{\Omega}{\bf 1}(x)\d s\\
		&=k\sum_{i=0}^\infty \int_{\Omega}\varphi_i\d \mu
		\int_0^\infty s^{k-1}\text{e}^{-\lambda_i(\Omega)s}\d s\varphi_i\\
		&=k!\sum_{i=0}^\infty \frac{\int_{\Omega}\varphi_i\d \mu}{(\lambda_i(\Omega))^k}\varphi_i,
	\end{align*}
	which implies
$$
	T_k(\Omega)=k!\sum_{i=0}^\infty \frac{\left(\int_{\Omega}\varphi_i\d \mu\right)^2}{(\lambda_i(\Omega))^k}
\geqslant \frac{k!\left(\int_{\Omega}\varphi_0\d \mu\right)^2}{(\lambda_0(\Omega))^k}.
$$
\qed

\noindent{\bf Proof of Corollary \ref{limit}.}
Denote by
$$\eta=\sup \left\{\lambda\geqslant 0: \limsup _{n \rightarrow \infty}\lambda^n \frac{T_n(\Omega)}{n!}<\infty\right\}.$$
First, by Theorem \ref{main},
$0<\mu(\varphi)^2\leqslant T_{ k}(\Omega)\lambda_0(\Omega)^k/k!\leqslant\mu(\Omega)<\infty$, which implies that 	$\lambda_0(\Omega)\leqslant\eta$.
On the other hand, for any $\lambda>\lambda_{0}(\Omega)$, 
$$\lambda^k\frac{T_{ k}(\Omega)}{k!}=\left(\frac{\lambda}{\lambda_{0}(\Omega)}\right)^k\left(\lambda_0(\Omega)^k\frac{T_{ k}(\Omega)}{k!}\right)\geqslant \left(\frac{\lambda}{\lambda_{0}(\Omega)}\right)^k\mu(\varphi)^2\rightarrow\infty\quad \text{as}\ k\rightarrow\infty,$$
therefore, $\lambda_0(\Omega)\geqslant\eta$, thus $\lambda_0(\Omega)=\eta$. 
The remaining part can be obtained by a similar argument. \qed

\section{Examples}\label{examples}

In this section, we will present some concrete examples and provide some estimates of their Dirichlet eigenvalues and the exit time moments from our main results.

\subsection{Diffusion processes on Riemannian manifolds}\label{diffusion}
Let $M$ be a $d$-dimensional connected complete Riemannian manifold  with Riemannian metric $\rho$ and Riemannian volume $\d x$.
Consider the diffusion operator $L=\Delta+ \nabla V\cdot\nabla$
where $V\in C^2(M)$, and let $Y$ be the associated ergodic diffusion process on $M$ with the generator $L$ and stationary distribution $\pi(\d x) =\exp{(V (x))}\d x/\int_M \exp{(V (x))}\d x$.
For fixed $o\in M$, let $\rho(x)$ be the Riemannian distance function from $o$, and   $\mathrm{cut}(o)$ be the cut-locus. Assume that either $\partial M$ is bounded or $M$ is convex.  Let $D$ is the diameter of $M$.
In this case, the associated Dirichlet form is given by
$$
\mathcal{E}(f,g)=\int_M\nabla f\cdot\nabla g\d\pi\quad \text{and}\quad \mathcal{F}=C^1(M)\cap L^2(M,\pi).
$$
Set $B_r=\{x\in M:\rho(x)\leqslant r\}$,
	$$\gamma(r)=\sup_{\rho(x)=r,x\notin \mathrm{cut}(o)}L\rho(x)\q\text{and}\q C(r)=\int_{1}^{r}\gamma(s)\d s.$$
	Assume that
	$$
		\delta_r:=\sup_{t\geq r}\int_{r}^{t}\exp{(-C(l))\d l}\int^{D}_{ t}\exp{(C(s))}\d s<\infty.
	$$
Then \cite[(3.13)]{HKMW21+} tells us that
	$$
		\lambda_0(B_r)\geqslant 1/(4\delta_r)>0.
	$$
	Thus plugging this into \eqref{k-moment} yields that
	$$T_k(B_r)\leqslant\frac{k!\mu(B_r)}{(\lambda_0(\Omega))^k}\leqslant 4k!\mu(B_r)\delta_r,$$
and $$
\mathbb{E}_\mu[\mathrm{e}^{\beta\tau_{B_r}}]\leqslant\left(1+\frac{4\beta\delta_r}{1-4\beta\delta_r}\right)\mu(B_r)\q \text{for }0<\beta< 1/(4\delta_r).
$$	

\subsection{Symmetric stable processes}
Let $Z:=(Z_t)_{t\geqslant 0}$ be a symmetric $\alpha$-stable process on $\mathbb{R}^d$ with   generator $-(-\Delta)^{\alpha / 2}$, $\alpha\in (0,2]$, where $-(-\Delta)^{\alpha / 2}$ is the fractional Laplacian.
Note that when $\alpha=2$ the process $Z$ is a $d$-dimensional Brownian motion running at twice the usual speed.
It is known that the process $Z$ is symmetric with respect to Lebesgue measure on $\mathbb{R}^d$. Fix a bounded open subset $\Omega\subset\mathbb{R}^d$, and denote by $X^\Omega$ the sub-process killed upon leaving $\Omega$ with transition kernel
$$
P_{t}^{\alpha,\Omega} (x,A):=\mathbb{P}_{x}\left(Z_{t}\in A, t<\tau_\Omega^{(\alpha)}\right),\quad x\in \Omega, A\in \mathscr{B}(\mathbb{R}^d),$$
where   $\tau_\Omega^{(\alpha)}$ is the exit time of process $Z$ from $\Omega$.

Let $\mathcal{L}_\alpha^\Omega$ be the associated generator of $P_t^{\alpha,\Omega}$.
Since $\Omega$ is bounded, the spectrum of $-\mathcal{L}_\alpha^\Omega$ is discrete (see \cite{DM07} for more details), denoted by
$$0<\lambda^{(\alpha)}_{0}(\Omega)\leqslant\lambda_{1}^{(\alpha)}(\Omega)\leqslant\lambda_{2}^{(\alpha)}(\Omega)\leqslant\cdots,$$

Let $\varphi_{0}^{(\alpha)}, \varphi_{1}^{(\alpha)}, \varphi_{2}^{(\alpha)},  \ldots$
be the corresponding orthonormal $L^{2}(\Omega)$ eigenfunctions with $\text{Leb}((\varphi_{i}^{(\alpha)})^2):=\int_\Omega\varphi_{i}^{(\alpha)}(x)^2\d x=1$ for all $i\geqslant 0$.
Denote by $T_k^{(\alpha)}(\Omega)=\int_\Omega \mathbb{E}_x[(\tau_\Omega^{(\alpha)})^k]\d x$ the $k$-th moment of $\tau_\Omega^{(\alpha)}$ .

From \cite[Theorems 4 and 5]{BLM01} we know that  if $\Omega\subset \mathbb{R}^d$ is a bounded domain, then the first Dirichlet eigenvalue $\lambda_0^{(\alpha)}(\Omega)$ has estimate
$$
 \frac{\gamma_{d}^{\alpha / d} 2^{\alpha} \Gamma\left(1+\frac{\alpha}{2}\right) \Gamma\left(\frac{d+\alpha}{2}\right)}{\Gamma\left(\frac{d}{2}\right) \operatorname{Leb}(\Omega)^{\alpha / d}}\leqslant \lambda_0^{(\alpha)}(\Omega)\leqslant \left(\lambda_0^{(2)}(\Omega)\right)^{\alpha/2},
$$
where $\gamma_d$ is the volume of the unit ball in $\mathbb{R}^d$.
 Therefore, together this fact with Theorems \ref{main} and \ref{main-3}, we arrive at
$$
\frac{k!\left(\text{Leb}(\varphi_0^{(\alpha)})\right)^2}{(\lambda_0^{(2)}(\Omega))^{\alpha k/2}}
\leqslant
T^{(\alpha)}_{ k}(\Omega)\leqslant  \frac{k!\left(\text{Leb}(\Omega)\right)^{(k\alpha / d)+1}\Gamma\left(\frac{d}{2}\right)^k }{\left(\gamma_{d}^{\alpha / d} 2^{\alpha} \Gamma\left(1+\frac{\alpha}{2}\right) \Gamma\left(\frac{d+\alpha}{2}\right)\right)^k}.$$

\subsection{Time-changed symmetric stable processes}
Let $Z:=(Z_t)_{t\geqslant 0}$ be the symmetric $\alpha$-stable process on $\mathbb{R}$ with   generator $-(-\Delta)^{\alpha / 2},\ \alpha\in(0,2)$.
Consider the following stochastic differential equation:
\begin{equation}\label{SDE}
	\mathrm{d} X_{t}=\sigma\left(X_{t-}\right) \mathrm{d} Z_{t},
\end{equation}
where $\sigma$ is a strictly positive continuous function on $\mathbb{R}$. By \cite[Proposition 2.1]{DK20}, there is a unique weak solution $X=(X_t)_{t\geqslant 0}$ to the SDE \eqref{SDE}, and $X$ can also be expressed as a time change process $
X_{t}:= Z_{\zeta_{t}},$ {where}  $$\zeta_{t}:=\inf \left\{s > 0: \int_{0}^{s} \sigma\left(Z_{u}\right)^{-\alpha} \mathrm{~d} u>t\right\}.
$$
Furthermore, from \cite[Section 1.2]{CW14} we see that the generator of  process $X$ is  $L=\sigma^\alpha\Delta^{\alpha / 2}$,  which is symmetric with respect to its invariant measure $\mu(\mathrm{d} x)=\sigma(x)^{-\alpha} \mathrm{d} x/\int_{\mathbb{R}}\sigma(x)^{-\alpha} \mathrm{d} x$.

As we know, \cite[Theorem 2]{W21+} shows that
$$\lambda_{0}((0, \infty)) \geqslant \frac{(\alpha-1)\Gamma(\alpha / 2)^{2}}{4\delta_+},$$
where $$\delta_{+}:=\sup _{x>0} x^{\alpha-1} \int_{x}^{\infty} \sigma(z)^{-\alpha} \mathrm{d} z<\infty.$$
Thus combining it with Theorem \ref{main}, we have $$T_k((0,\infty))\leqslant\frac{k!\mu((0,\infty))}{(\lambda_0((0,\infty)))^k}\leqslant\frac{4^k k!\mu((0,\infty))}{(\alpha-1)^k\Gamma(\alpha / 2)^{2k}}\delta_+^k,$$
and
$$
\mathbb{E}_\mu[\mathrm{e}^{\beta\tau_{(0,\infty)}}]\leqslant\left(1+\frac{4\beta\delta_+}{(\alpha-1)\Gamma(\alpha/2)^2-4\beta\delta_+}\right)\mu((0,\infty))\q \text{for }0<\beta< \frac{(\alpha-1)\Gamma(\alpha / 2)^{2}}{4\delta_+}.
$$

{\bf Acknowledgement}\
Lu-Jing Huang acknowledges support from NSFC (No. 11901096) and NSF-Fujian(No. 2020J05036).
Tao Wang acknowledges the National
Key Research and Development Program of China (2020YFA0712900) and the project from the Ministry of Education in China.


\bibliographystyle{plain}
\bibliography{exit}

\begin{thebibliography}{10}

\bibitem{BLM01}
R.~Banuelos, R.~Latala, and P.~J. M\'{e}ndez-Hern\'{a}ndez.
\newblock {A Brascamp-Lieb-Luttinger-type inequality and applications to
  symmetric stable processes}.
\newblock {\em Proc. Amer. Math. Soc.}, 129:2997--3008, 2001.

\bibitem{CF12}
Z.-Q. Chen and M.~Fukushima.
\newblock {\em {Symmetric Markov Processes, Time Change, and Boundary Theory}}.
\newblock London Mathematical Society Monographs Series, 35. Princeton
  University Press, Princeton, 2012.

\bibitem{CW14}
Z.-Q. Chen and J.~Wang.
\newblock Ergodicity for time-changed symmetric stable processes.
\newblock {\em Stochastic Process. Appl.}, 124(9):2799--2823, 2014.

\bibitem{DM07}
R.~D. Deblassie and P.~J. M\'{e}ndez-Hern\'{a}ndez.
\newblock {$\alpha$-continuity properties of the symmetric $\alpha$-stable
  process}.
\newblock {\em Trans. Amer. Math. Soc.}, 359(5):2343--2359, 2007.

\bibitem{DK20}
L.~Doring and A.~E. Kyprianou.
\newblock Entrance and exit at infinity for stable jump diffusions.
\newblock {\em Ann. Probab.}, 48(3):1220--1265, 2020.

\bibitem{DLM17}
E.~B. Dryden, J.~J. Langford, and P.~McDonald.
\newblock {Exit time moments and eigenvalue estimates}.
\newblock {\em Bull. Lond. Math. Soc.}, 49:480--490, 2017.

\bibitem{MYM11}
M.~Fukushima, Y.~Oshima, and M.~Takeda.
\newblock {\em {Dirichlet forms and symmetric Markov processes. Second revised
  and extended edition}}.
\newblock Berlin, de Gruyter, 2011.

\bibitem{GW02}
F.-Z. Gong and F.-Y. Wang.
\newblock {Functional inequalities for uniformly integrable semigroups and
  application to essential spectrums}.
\newblock {\em Forum Math.}, 14:293--313, 2002.

\bibitem{HKMW21+}
L.-J. Huang, K.-Y. Kim, Y.-H. Mao, and T.~Wang.
\newblock {Variational formulas for the exit time of Hunt processes generated
  by semi-Dirichlet forms}.
\newblock {\em Stochastic Process. Appl.}, 148:380--399, 2022.

\bibitem{HMP16}
A.~Hurtado, S.~Markvorsen, and V.~Palmer.
\newblock {Estimates of the first Dirichlet eigenvalue from exit time moment
  spectra}.
\newblock {\em Math. Ann.}, 365:1603--1632, 2016.

\bibitem{McD02}
P.~McDonald.
\newblock {Isoperimetric conditions, Poisson problems, and diffusions in
  Riemannian manifolds}.
\newblock {\em Potential Anal.}, 16:115--138, 2002.

\bibitem{McD13}
P.~McDonald.
\newblock {Exit times, moment problems and comparison theorems}.
\newblock {\em Potential Anal.}, 38:1365--1372, 2013.

\bibitem{O13}
Y.~Oshima.
\newblock {\em {Semi-Dirichlet Forms and Markov Processes}}.
\newblock Walter de Gruyter \& Co., Berlin, 2013.

\bibitem{Polya48}
G.~P\'{o}lya.
\newblock {Torsional rigidity, principal frequency, electrostatic capacity and
  symmetrization.}
\newblock {\em Q. Appl. Math.}, 6:267--277, 1948.

\bibitem{PS51}
G.~P\'{o}lya and G.~Szeg{\"o}.
\newblock {\em {Isoperimetric inequalities in mathematical physics}}.
\newblock Annals of Mathematics Studies, vol. 27. Princeton University Press,
  Princeton, 1951.

\bibitem{BBV15}
M.~van~den Berg, G.~Buttazzo, and B.~Velichkov.
\newblock {Optimization problems involving the first Dirichlet eigenvalue and
  the torsional rigidity}.
\newblock {\em New trends in shape optimization, International Series of
  Numerical Mathematics}, 166:19--41, 2015.

\bibitem{BFNT16}
M.~van~den Berg, V.~Ferone, C.~Nitsch, and C.~Trombetti.
\newblock {On P\'{o}lya's inequality for torsional rigidity and first Dirichlet
  eigenvalue}.
\newblock {\em Integral Equations Operator Theory}, 86:579--600, 2016.

\bibitem{W21+}
T.~Wang.
\newblock Exponential and strong ergodicity for one-dimensional symmetric
  stable jump diffusions.
\newblock {\em arXiv: 2104.07947}, 2021.

\end{thebibliography}

\end{document}